\newtheorem{lemma}{Lemma}
\newtheorem{theorem}{Theorem}
\begin{document}
%
% paper title
% can use linebreaks \\ within to get better formatting as desired
\title{Kaczmarz Algorithm with Soft Constraints for User Interface Layout}

\author{Noreen Jamil, Deanna Needell, Johannes M\"uller, Christof Lutteroth, Gerald Weber}

%\author{%
%% author names are typeset in 11pt, which is the default size in the author block
%{Noreen Jamil$^*$, Deanna Needell$^\dag$, Johannes M\"uller$^*$, Christof Lutteroth$^*$, Gerald Weber$^*$}%
%% add some space between author names and affils
%\vspace{1.6mm}\\
%\fontsize{10}{10}\selectfont\itshape
%$^*$Department of Computer Science, University of Auckland, New Zealand\\
%$^\dag$Department of Mathematical Sciences, Claremont McKenna College, USA\\
%\fontsize{9}{9}\selectfont\ttfamily\upshape
%\{njam031, jmue933\}@aucklanduni.ac.nz, dneedell@cmc.edu, \{lutteroth, gerald\}@cs.auckland.ac.nz\\
%}

\begin{abstract}
The Kaczmarz method is an iterative method for solving large systems of equations that projects iterates orthogonally onto the solution space of each equation.
In contrast to direct methods such as Gaussian elimination or QR-factorization, this algorithm is efficient for problems with sparse matrices, as they appear in constraint-based user interface (UI) layout specifications.
However, the Kaczmarz method as described in the literature has its limitations:
it considers only equality constraints and does not support soft constraints, which makes it inapplicable to the UI layout problem.

In this paper we extend the Kaczmarz method for solving specifications containing soft constraints, using the prioritized IIS detection algorithm.
Furthermore, the performance and convergence of the proposed algorithms are evaluated empirically using randomly generated UI layout specifications of various sizes.
The results show that these methods offer improvements in performance over standard methods like Matlab's LINPROG, a well-known efficient linear programming solver.
\end{abstract}
%\keywords{UI layout, Kaczmarz algorithm, Soft constraints, Inequalities}
%\vspace{1\baselineskip}
%\begin{keywords}
%UI layout, Kaczmarz algorithm, Soft constraints, Inequalities.
%\end{keywords}
\maketitle

\section{Introduction}

% intro to linear constraint solving
Linear problems are encountered in a variety of fields such as engineering, mathematics and computer science.
To solve these problems, various numerical methods have been proposed.
These methods can be classified into direct and indirect methods, the latter also known as iterative.
Direct methods intend to calculate an exact solution in a finite number of steps, whereas iterative methods start with an initial approximation and usually produce improved approximations in a theoretically infinite sequence whose limit is the exact solution~\cite{Bhatti:Numerical-Analysis}.

% sparse problems and iterative methods
Many linear problems are \textit{sparse}, i.e.\ most linear coefficients in the corresponding coefficient matrix are zero so that the number of non-zero coefficients is \emph{$O(n)$} with $n$ being the number of variables~\cite{Kunis:Sparse-Trigonometric-Polynomials}.  Sparse problems frequently occur in the domain of user interface (UI) layout, the main focus of this paper (we discuss this domain in detail in Section~\ref{UILayout}).
Since it is useful to have efficient solving methods specifically for sparse linear systems, much attention has been paid to iterative methods, which are often preferable for such cases~\cite{Anita:Numerical-MethodsforScientistandEngineers}.
The advantage is that iterative methods spend minimal processing time on coefficients that are zero.
% fill-in effect of direct methods
Direct methods, on the other hand, usually lead to fill-in, i.e.\ coefficients change from an initial zero to a non-zero value during the execution of the algorithm.
In these methods we therefore lose the sparsity property and have to deal with a lot more coefficients, which makes processing slower.
Although there are some techniques to minimize fill-in effects, iterative methods are often faster than direct methods for large, sparse problems~\cite{Michele:Preconditioning}.

% linear relaxation
A common iterative method used to solve sparse linear systems is the Kaczmarz algorithm~\cite{Kaczmarz:Lettres}.
Starting with an initial guess, it selects a row index of the matrix and projects the current iterate onto the solution space of that equation, refining the solution until a sufficient precision is reached.
In our previous work~\cite{Jamil:Linear-Relaxation} we proposed extensions to the linear relaxation method to deal with over-determination.
The original linear relaxation method could not be used in such cases as it requires the problem matrix to be square.
However, the Kaczmarz method does not have this limitation, and hence seems an obvious choice for problems with non-square matrices as they occur in UI layout.

% inequalities
Despite its efficiency for sparse systems, the Kaczmarz method is currently not used for UI layout.
The reasons for this are two-fold.
First, the UI layout contains linear equality and \textit{inequality} constraints for specifying relationships among objects such as ``inside", ``above", ``below", ``left-of", ``right-of" and ``overlap".
Although the Kaczmarz algorithm and its variants are not designed to handle inequality constraints, preliminary work on the Kaczmarz method for inequality constraints suggests the natural adaptation, which ignores inequality constraints if they are already satisfied and otherwise treats them as equations~\cite{LL10:Randomized-Methods}.
We also adapt this heuristic approach for the UI problem.

% soft constraints
The second issue that we face in UI layout and many other problems is that the system may contain \textit{conflicting} constraints.
This may happen by over-constraining, i.e.\ by adding too many constraints, making the system infeasible.
If a specification contains conflicting constraints, the basic Kaczmarz method will not converge.
To resolve conflicts, the notion of \emph{soft constraints} can be introduced.
In contrast to the usual \emph{hard} constraints, which cannot be violated, soft constraints may be violated as much as necessary if no other solution can be found.
Soft constraints can be prioritized so that in a conflict between two soft constraints only the soft constraint with the lower priority is violated.
This leads naturally to the notion of \emph{constraint hierarchies}, where all constraints are essentially soft constraints, and the constraints that are considered ``hard'' simply have the highest priorities~\cite{Wilson:Constraint-hierarchies}.
Using only soft constraints has the advantage that a problem is always solvable, which cannot be guaranteed if hard constraints are used.

% our algorithms for soft constraints
We propose a conflict resolution algorithm for solving systems of prioritized linear constraints with the Kaczmarz method.
In the algorithm, non-conflicting constraints are successively added in descending order of priority.
This algorithm yields conflict-free subproblems to a given problem.
There are already algorithms for finding feasible subsystems, but they differ from our approaches as they do not take into account prioritized constraints~\cite{John:Fast-Heuristics}.

% evaluation
With the presented conflict resolution algorithm, Kaczmarz can be applied to overdetermined linear constraint problems, for example in the domain of UI layout.
This was experimentally evaluated with regard to convergence and performance, using randomly generated UI layout specifications.
The results show that the proposed algorithm is optimal and efficient.
Furthermore, we observe that our implementation outperforms Matlab's LINPROG linear optimization package~\cite{linprog:solver}, LP-Solve~\cite{lp:solve} and the implementation of QR-decomposition of the Apache Commons Math Library~\cite{Commons:Math}.
LP-Solve is a well-known linear programming solver that has been used for UI layout.
The implementation of QR-decomposition of the Apache Commons Math Library is an example of a direct method.

\subsection{Organization}

The remainder of the paper is organized as follows.
We begin with a short overview of constraint-based UIs and iterative solution procedures for linear systems in Section~\ref{sec:back}.
We discuss related work in Section~\ref{RelatedWork}.
In Section~\ref{KaczmarzMethod} we describe the Kaczmarz method in detail, and explain how support for inequalities and soft constraints was added.
The methodology and the results of the experimental evaluation are presented in Section~\ref{Evaluation}.
Section~\ref{Conclusion} summarizes conclusions and provides an outlook on future work.

\section{Background}\label{sec:back}

%In this section we give a short overview about constraint-based UIs and iterative solution procedures for linear systems.
In~\cite{Jamil:Linear-Relaxation} we extended linear relaxation to handle over-determined systems and conflicting constraints
for UI layout.
We proposed two pivot assignment algorithms that can be used with any problem matrix, regardless of its shape or diagonal elements.
The first algorithm selects pivot elements pseudo-randomly and the second algorithm selects pivot elements according to certain criteria.
However, computing the pivot assignment for solving non-square matrices can make linear relaxation slow.
To overcome this, we consider the Kaczmarz method, which does not require a pivot assignment.

\subsection{User Interface Layout as a Linear Problem} \label{UILayout}
% motivating example: UI layout
Constraints are a suitable mechanism for specifying the relationships among objects.
They are used in the area of logic programming, artificial intelligence and UI specification.
They can be used to describe problems that are difficult to solve, conveniently decoupling the description of the problems from their solution.
Due to this property, constraints are a common way of specifying UI layouts, where the objects are widgets and the relationships between them are spatial relationships such as alignment and proportions.
In addition to the relationships to other widgets, each widget has its own set of constraints describing properties such as minimum, maximum and preferred size.

% UI layouts use linear constraints, which are sparse
UI layouts are often specified with linear constraints~\cite{Weber:High-Level-Constraints}.
The positions and sizes of the widgets in a layout translate to variables.
Constraints about alignment and proportions translate to linear equations, and constraints about minimum and maximum sizes translate to linear inequalities.
Many of the constraints are soft because they describe desirable properties in the layout (e.g.\ preferred sizes), which cannot be satisfied under all conditions (e.g.\ all layout sizes).
Furthermore, the resulting systems of linear constraints are sparse.
There are constraints for each widget that relate each of its four boundaries to another part of the layout, or specify boundary values for the widget's size, as shown in Figure~\ref{fig:Gui-diagram}.
As a result, the direct interaction between constraints is limited by the topology of a layout, resulting in sparsity.

\begin{figure}[t]
\centering
\includegraphics[width=0.6\columnwidth]{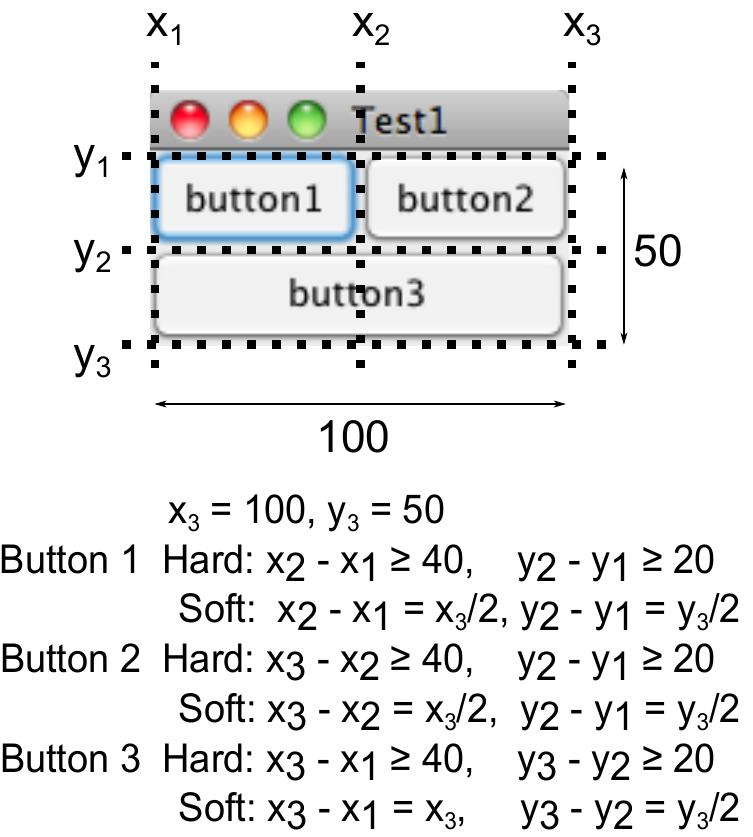}
\caption{An example of constraint-based UI layout}\label{fig:Gui-diagram}
\end{figure}

%why linear relaxations cannot currently be applied
Because the UI layout problem contains inequality and soft constraints, existing UI layout solvers use algorithms other than Kaczmarz.
Some of these solvers will be discussed in the following section.

\section{Related Work} \label{RelatedWork}

Different direct and iterative methods exist which can solve least-squares problems.
Examples of direct methods are QR-factorization~\cite{Biswa:N-Linear-Algebra} and normal equations~\cite{Michael:Normal-Equations}.
Iterative methods that are used to solve linear systems include simplex~\cite{Dantzig:Linear-Programming}, revised simplex~\cite{Hamdy:Operations-Research}, conjugate gradient~\cite{Hestenes:1952uq}, generalized minimal residual~\cite{Saad:1986fk}, and others~\cite{axelsson1996iterative}.
%Several methods proposed for solving weighted least-squares solutions are as follows.
In order to solve over-determined systems in a least-squares sense, Herman adds some additional constraints and variables, similar to slack variables in the simplex approach~\cite{Herman:Iterative}.
The resulting method converges towards the least-squares solution.
Popa analyzed a similar approach for solving least-squares problems~\cite{Popa:Least,Rafal:Least}.
Censor showed that if the relaxation parameter goes to zero then the Kaczmarz method converges to a weighted least-squares solution for inconsistent systems~\cite{Censor:underrelaxation}.

Most of the research related to GUI layout involves various algorithms for solving constraint hierarchies.
Research related to constraint-based UI layout has provided results in the form of tools~\cite{Hosobe:Simplex-Based, Hosobe:scalable-linear-constraint} and algorithms~\cite{Badros:cassowary,Stuckey:arithmetic-constraints} for specific tasks.
Our work is concerned with two different aspects.
We must find a solution for linear inequality constraints with iterative methods, while also being able to handle soft constraints.
We will discuss related work for both aspects in turn.

Various algorithms were proposed for solving linear inequality constraints in UI layout.
The Indigo algorithm~\cite{Alan:Indigo} uses interval propagation to solve acyclic collections of inequality constraints, however, it does not handle simultaneous equality and inequality constraints.
This is overcome by the Detail method~\cite{Hosobe:Generalized-local-propagation, Hosobe:constraint-simultaneous}, which can solve linear equality and inequality constraints simultaneously.

The Cassowary solver~\cite{Badros:cassowary} can also handle linear inequalities.
It uses the simplex algorithm, and inequalities are solved by introducing slack variables.
QOCA~\cite{Finlay:constraint-solving-toolkit} intends to overcome the difficulties in maximizing the efficiency and facilitating the re-use of the solver in other applications.
This solver introduces slack variables to convert inequality constraints into equality constraints in a similar way to the Cassowary solver.
The HiRise constraint solver~\cite{Hosobe:scalable-linear-constraint} resolves both equality and inequality constraints in combination with quasi-linear optimization.

% soft constraints in UI layout solvers
All constraint solvers for UI layout must support over-determined systems.
The commonly used techniques for dealing with over-determined problems are weighted constraints and constraint hierarchies~\cite{Meseguer:over-constrained,Matsuoka:constraint-hierarchies}.
Weighted constraints are typically used with some general forms of direct methods, while constraint hierarchies are especially utilized in linear programming based algorithms.
Many UI layout solvers are based on linear programming and support soft constraints using slack variables in the objective function~\cite{Badros:cassowary,Stuckey:arithmetic-constraints,Finlay:constraint-solving-toolkit,Weber:High-Level-Constraints}.

% direct methods
Most of the direct methods for soft constraint problems are least-squares methods such as LU-decompos\-ition and QR-decompo\-sition~\cite{Yasuhiro:least-squares}.
The UI layout solver HiRise~\cite{Hosobe:scalable-linear-constraint} is an example of this category.
HiRise2~\cite{Hosobe:Simplex-Based} is an extended version of the HiRise constraint solver which solves hierarchies of linear constraints by applying an LU-decompos\-ition-based simplex method.

Many different local propagation algorithms have been proposed for solving constraint hierarchies in UI layout.
The DeltaBlue~\cite{Freeman:Delta-Blue}, SkyBlue~\cite{Skyblue:local-propagation-constraint-solver} and Detail~\cite{Hosobe:constraint-simultaneous} algorithms are examples of this category.

The problem of finding the largest possible subset of constraints that has a feasible solution given a set of linear constraints is widely known as the maximum feasible subsystem (MaxFS) problem~\cite{John:Fast-Heuristics}.
The dual problem to this is the problem of finding the irreducible infeasible subsystem (IIS)~\cite{Amaldi:Maximum-Feasible}.
If one more constraint is removed from an IIS, the subsystem will become feasible.
For both problems different solving methods are proposed.

There are non-deterministic and deterministic methods to solve the MaxFS problem.
Some of these methods use heuristics~\cite{Amaldi: Two-Phase, O:Feasible-System}, but only a few methods solve the problem deterministically.
The branch and cut method proposed by Pfetsch~\cite{Pfetsch:Branch-Cut} is an example of a deterministic method.

Besides methods for MaxFS there are also some methods to solve the IIS problem.
These methods are: deletion filtering, IIS detection and grouping constraints.
Deletion filtering~\cite{Dravnieks:Minimal} removes constraints from the set of constraints and checks the feasibility of the reduced set.
IIS detection~\cite{Tamiz:IIS} starts with a single constraint and adds constraints successively.
The grouping constraints method~\cite{Guieu:MixedInteger} was introduced to speed up the aforementioned algorithms by adding or removing groups of constraints simultaneously.
Even though these methods deal with the problem of finding a feasible subsystem, it is not possible to apply them directly.
The main reason is that they do not consider prioritized constraints as we do in our approach.

\section{Kaczmarz Method} \label{KaczmarzMethod}

The Kaczmarz method is an iterative method used for solving large-scale over-determined linear systems of equations~\cite{Kaczmarz:Lettres}.  It is also used in tomography, and in that setting is called the ``algebraic reconstruction technique'' (ART)~\cite{GBH70:Algebraic-Reconstruction}.
Given a system of \emph{m} equations and \emph{n}  variables of the form
\begin{equation}\label{linearequation}
    Ax=b,
\end{equation}
the Kaczmarz method projects orthogonally onto the solution hyperplane of each constraint in the system sequentially.
The algorithm can thus be described as follows.
\begin{equation}\label{MainKaczmarz}
x_{k+1}=x_{k} + \omega \frac{(b_i - a_{i} \cdot x_k) a_{i}}{\|a_i\|^2}
\end{equation}
where $x_{k}$ is the $k$-th iterate, $i = (k\mod m) +1$ (for deterministic Kaczmarz), $a_i$ is the $i$-th row of the matrix $A$, $b_i$ is the $i$-th component of the right-hand side vector, and $\|a\|$ denotes the Euclidean norm of the vector $a$.
Alternatively, to randomize the Kaczmarz method, we can choose a random $i$ with $1 \leq i \leq m$ for each $k$.

Starting with an initial estimate $x_0$, the method projects the current iterate onto the solution space of the next equation.
The algorithm iterates until the relative approximate error is less than a pre-specified tolerance. $\omega$ is an optional relaxation parameter that is set to 1 in the original Kaczmarz method.

\subsection{Convergence}\label{sec:Relaxation:Convergence}

%Convergence of Kaczmarz method
The  Kaczmarz method is guaranteed to converge if $\omega$ lies inside the interval $(0,2)$~\cite{Kaczmarz:Lettres,N-TMCT1986}. 
In this section we give the convergence proof using the terminology explained below.

\begin{lemma}[Translation invariance]
Let the Kaczmarz method for \(Ax=b\) converge to \(\bar x\) starting with $x_{0}$. Then the Kaczmarz method for the homogeneous system \(Ay=0\) starting with $y_{0} =x_{0}  - \bar x$ will have the same convergence behavior, i.e.  $y_{k} =x_{k}  - \bar x$ for all $k$.
\end{lemma}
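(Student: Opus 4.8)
The plan is to establish the identity $y_k = x_k - \bar x$ for all $k$ by induction on $k$; once this is in hand, ``the same convergence behavior'' is immediate, since $x_k \to \bar x$ forces $y_k = x_k - \bar x \to 0$, the solution of the homogeneous system, and at the same rate along the same schedule of projections.

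Before the induction I would record two preliminary observations. First, because the iteration~\eqref{MainKaczmarz} for $Ax=b$ converges to $\bar x$, consecutive iterates satisfy $x_{k+1}-x_k \to 0$, and each row index is used infinitely often; passing to the limit in~\eqref{MainKaczmarz} along the steps that select row $i$ gives $a_i\cdot\bar x = b_i$ for every $i$, i.e.\ $A\bar x = b$. Second, at step $k$ both runs must use the \emph{same} row index $i$ — namely $i=(k\bmod m)+1$ in the deterministic case, or the same realized random index in the randomized case — so that the two iterations can be compared term by term. The base case $k=0$ then holds by the choice $y_0 = x_0 - \bar x$.

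For the inductive step, assume $y_k = x_k - \bar x$ and let $i$ be the row used at step $k$. Writing~\eqref{MainKaczmarz} for the homogeneous system (right-hand side $0$) and substituting the inductive hypothesis gives
\[
y_{k+1} \;=\; (x_k-\bar x) \;-\; \omega\,\frac{\bigl(a_i\cdot(x_k-\bar x)\bigr)\,a_i}{\|a_i\|^2}.
\]
By linearity of $v\mapsto a_i\cdot v$ and the relation $a_i\cdot\bar x = b_i$ from the previous step, the scalar in the numerator equals $a_i\cdot x_k - b_i$, so
\[
y_{k+1} \;=\; x_k \;+\; \omega\,\frac{(b_i - a_i\cdot x_k)\,a_i}{\|a_i\|^2} \;-\; \bar x \;=\; x_{k+1}-\bar x,
\]
which closes the induction. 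There is essentially no hard part here: the argument is a one-line substitution using linearity. The only points that genuinely need care are the two preliminary observations — that the limit $\bar x$ actually satisfies $A\bar x = b$, and that both runs are driven by the same sequence of row indices — so I would be careful to state those explicitly rather than leave them implicit.
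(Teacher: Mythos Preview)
Your proof is correct and follows the same approach as the paper, which simply states ``The proof is by induction. The induction step follows trivially from the linear definition of the iteration step.'' You have filled in precisely the details the paper omits---in particular the preliminary observation that $A\bar x=b$, which is genuinely needed for the induction step to go through, and the remark that both runs must use the same row schedule---so your version is a strict elaboration of the paper's argument rather than a different route.
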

The proof is by induction. The induction step follows trivially from the linear definition of the iteration step.

\begin{lemma}[Convergence of homogeneous system]
The Kaczmarz method for the homogeneous system \(Ay=0\) with nonsingular $A$ converges exponentially for every initial guess~$y_{0}$.
\end{lemma}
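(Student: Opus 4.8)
The plan is to show that one full sweep of the Kaczmarz iteration (i.e.\ projecting successively onto the $m$ hyperplanes $a_i\cdot y=0$) acts on the iterate as multiplication by a fixed linear operator $T$, and that this operator is a strict contraction on $\mathbb{R}^n$ whenever $A$ is nonsingular. Concretely, the single-row update with $b_i=0$ is $y\mapsto P_i y$ where $P_i = I - \frac{a_i a_i^{\mathsf T}}{\|a_i\|^2}$ is the orthogonal projection onto the hyperplane $a_i^\perp$; composing a whole sweep gives $y_{k+m} = T y_k$ with $T = P_m P_{m-1}\cdots P_1$. Each $P_i$ is an orthogonal projector, hence $\|P_i\|_2 = 1$ and $\|P_i y\|_2 \le \|y\|_2$ with equality iff $y \in a_i^\perp$. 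Therefore $\|Ty\|_2 \le \|y\|_2$, and the inequality is strict unless $y$ lies in $a_i^\perp$ for every $i$, i.e.\ unless $y \in \bigcap_i a_i^\perp = \ker A$.

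Next I would invoke nonsingularity of $A$: since $A$ has trivial kernel, $\bigcap_{i=1}^m a_i^\perp = \{0\}$, so $\|Ty\|_2 < \|y\|_2$ for every $y\neq 0$. To upgrade this pointwise strict inequality to a uniform bound $\|T\|_2 = q < 1$, I would use compactness: the map $y\mapsto \|Ty\|_2$ is continuous on the unit sphere, which is compact, so it attains its maximum $q$; by the strict inequality $q<1$. Then $\|y_{k+m}\| = \|T y_k\| \le q\|y_k\|$, and iterating over sweeps gives $\|y_{jm+r}\| \le q^{j}\|y_r\| \le q^{j}\|y_0\|$ (using $\|y_{k+1}\|\le\|y_k\|$ within a sweep), which is exponential decay to $0$ independent of the starting guess $y_0$. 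Combined with the translation-invariance lemma, this also re-proves convergence of the inhomogeneous method to $\bar x$, though the statement here only asks for the homogeneous case.

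The main obstacle is the step from ``strict contraction at every point'' to ``uniform contraction factor $q<1$'': a linear operator can satisfy $\|Ty\|<\|y\|$ for all $y\neq 0$ yet have operator norm exactly $1$ in infinite dimensions, so the argument genuinely relies on finite-dimensionality via the compactness of the unit sphere. A secondary subtlety worth stating carefully is that $T$ is generally not symmetric, so its operator norm is not its spectral radius; one should not be tempted to argue via eigenvalues of $T$ directly. Everything else — that each $P_i$ is the stated orthogonal projector, that composition of norm-$\le 1$ maps has norm $\le 1$, that $\bigcap a_i^\perp = \ker A$ — is routine linear algebra. I would also remark that the same proof goes through verbatim for the randomized sweep order and, with minor changes, for relaxation parameter $\omega\in(0,2)$, since then the single-row map is $I - \omega\frac{a_ia_i^{\mathsf T}}{\|a_i\|^2}$, which is still a (non-orthogonal but norm-$\le 1$) linear map fixing exactly $a_i^\perp$.
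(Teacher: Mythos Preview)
Your argument is correct and follows essentially the same route as the paper: model one full sweep as a product $T=P_m\cdots P_1$ of non-expansive linear maps, use nonsingularity of $A$ to get $\|Ty\|<\|y\|$ for $y\neq 0$, and deduce exponential decay. If anything, your version is tighter: the paper phrases the conclusion as $\rho(K(A))<1$ without spelling out the compactness step, whereas you explicitly pass through the operator norm via compactness of the unit sphere and (rightly) caution that $T$ is not symmetric, so a direct eigenvalue argument would be delicate.
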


\begin{proof}
The Kaczmarz method is a linear method; by definition, the change to the estimate in every iteration step is a linear function that can be modeled with an iteration matrix  $K_i(A)$.
We show that the spectral radius $\rho$ fulfills $\rho(K_i(A))=1.$
It suffices to show that all vectors are transformed to vectors of shorter or equal size
(if $\rho(K_i(A)) > 1$ there would be a vector that gets longer).
We observe that each solution hyperplane of the homogeneous system goes through the origin.
Since the iteration step performs an orthogonal projection, for $\omega =1$ the origin and the points
 $y_{k}$ and $y_{k+1}$ form a triangle with a right angle at $y_{k+1}$.
Hence
\begin{equation}\label{eq:convergence}
\|y_{k+1}\| \leq \|y_{k}\|
\end{equation}
by Pythagoras.
For $0 < \omega <2$, $y_{k+1}$ is equal in length to a weighted vector sum of  $y_{k}$ and the result for $\omega =1$, so its length is intermediate and~\eqref{eq:convergence} still holds.
Hence we have shown that $\rho(K_i(A)) \leq 1$.
But if the estimate is already a solution for the constraint, then it does not move, hence giving $\rho(K_i(A)) = 1$.
We now look at the product
\begin{equation}
K(A) = \prod_{1 \leq i \leq m} K_i(A).
\end{equation}
For any $y_0$  we have 
\begin{equation}
K(A) y_0 = y_m.
\end{equation}
We now show for any $y_0 \neq 0$ that $\|y_0\| > \|y_m\|$.
There must be one $i$ so that $y_{i-1} \neq y_i$, since $A$ is nonsingular and hence $y_0 \neq 0$ cannot fulfill all constraints at once.
Since in step $i$ we have  $y_{i-1} \neq y_i$, we also have by Pythagoras $\|y_{i-1}\| > \|y_i\|$, as explained above.
In all other steps the error does not increase, hence we know
\begin{equation}
\|K(A) z\| < \|z\|.
\end{equation}
This means, overall we know $\rho(K(A)) = c < 1$.
Hence the error of the estimate decreases over the course of $m$ iterations by at least $c$,
and overall we get an exponential convergence behavior with base $\leq \sqrt[m]{c}$.
\end{proof}

The proof can be easily generalized to a singular $A$ by only considering the nonsingular orthogonal subspace.
Both lemmata together clearly give:

\begin{theorem}
The Kaczmarz method for \(Ax=b\) converges for every initial guess $x_{0}$.
\end{theorem}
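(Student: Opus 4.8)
The plan is to combine the two preceding lemmata directly. Given the system \(Ax=b\), assume for the moment that it is consistent, so that a solution \(\bar x\) exists; then the Kaczmarz iteration, being an alternating sequence of orthogonal projections onto the solution hyperplanes of the individual equations, is well defined and, by the classical theory (or by a fixed-point argument), converges to some point \(\bar x\) in the solution set. This gives us the hypothesis needed for the Translation invariance lemma.

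Next, I would invoke the \textbf{Translation invariance} lemma: if the Kaczmarz method for \(Ax=b\) starting from \(x_0\) converges to \(\bar x\), then the method for the homogeneous system \(Ay=0\) starting from \(y_0 = x_0 - \bar x\) satisfies \(y_k = x_k - \bar x\) for all \(k\). Thus convergence of \(x_k\) to \(\bar x\) is equivalent to convergence of \(y_k\) to \(0\). The \textbf{Convergence of homogeneous system} lemma then supplies exactly this: for nonsingular \(A\), \(y_k \to 0\) exponentially for every \(y_0\), and by the remark following the proof this extends to singular \(A\) by restricting attention to the orthogonal complement of the nullspace (on the nullspace component the iterates are already stationary, since those directions lie inside every solution hyperplane of the homogeneous system). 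Hence \(x_k = y_k + \bar x \to \bar x\), which proves the theorem for every initial guess \(x_0\).

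The main obstacle is the logical gap at the very start: both lemmata are phrased \emph{conditionally} on the Kaczmarz method already converging to some \(\bar x\), so to get an unconditional statement I must first establish that a limit exists. For consistent systems this is standard — the composition of projections \(K(A)\) is a nonexpansive affine map whose fixed-point set is the (nonempty) solution set, and the contraction estimate \(\|K(A)y_0\| < \|y_0\|\) from the second lemma's proof, applied on the orthogonal complement of the solution set, forces the component of \(x_k\) transverse to the solution set to vanish while the component within it is eventually frozen; so \(x_k\) converges. For inconsistent systems the theorem as literally stated is false unless one reads ``converges'' in the cyclic/least-squares sense, so I would either restrict to consistent \(Ax=b\) or note that the statement is implicitly about the solvable case (as the surrounding discussion of soft constraints suggests). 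I would state this consistency assumption explicitly rather than leave it buried, since it is the only nontrivial point the two lemmata do not already hand us.

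Concretely, the write-up would read roughly: ``\emph{Proof.} Assume \(Ax=b\) is consistent. As shown in the proof of the previous lemma, the iteration operator over one full sweep is nonexpansive and strictly contracts any component orthogonal to the solution set, so the iterates \(x_k\) converge to some solution \(\bar x\). By the Translation invariance lemma, \(y_k := x_k - \bar x\) is precisely the Kaczmarz sequence for \(Ay=0\) with \(y_0 = x_0 - \bar x\), which by the Convergence of homogeneous system lemma (and its extension to singular \(A\)) tends to \(0\). Therefore \(x_k \to \bar x\), and since \(x_0\) was arbitrary the method converges for every initial guess. \(\square\)''
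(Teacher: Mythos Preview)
Your approach is essentially the paper's --- combine the two lemmata --- and you correctly spot that Lemma~1, as literally stated, presupposes convergence and so looks circular. But your remedy (first prove convergence by a separate nonexpansiveness/contraction argument, then invoke the lemmata) is roundabout: once you have established that $x_k$ converges, the theorem is already done and the two lemmata contribute nothing further. Your ``Concretely'' write-up in effect re-proves Lemma~2 in the inhomogeneous setting and then appends the lemmata as decoration.

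The cleaner fix --- and what the paper's one-line ``both lemmata together clearly give'' is really relying on --- is to read Lemma~1 for what its induction proof actually shows: if $\bar x$ is \emph{any} solution of $Ax=b$ (not necessarily a limit of the iteration), then $y_k := x_k - \bar x$ coincides with the Kaczmarz sequence for $Ay=0$ started at $y_0 = x_0 - \bar x$. The induction step uses only that each affine projection fixes $\bar x$, i.e.\ that $A\bar x = b$; it never uses that $x_k$ converges. With that reading there is no circularity at all: assume consistency, pick a solution $\bar x$, apply the translation relation to get $x_k = \bar x + y_k$, apply Lemma~2 (and its singular extension) to get $y_k \to 0$, and conclude $x_k \to \bar x$. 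Your remark that consistency must be assumed is correct and worth stating explicitly, but you do not need to establish convergence before invoking translation invariance --- you only need the existence of a solution.
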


% pre-conditioners
The constant $c$ is a characteristic of the problem matrix $A$, similar to the condition number.
Since the convergence rate of the Kaczmarz method depends on $c$, it is imaginable that pre-conditioners could be used to reduce $c$ and enhance the convergence speed.
Pre-conditioners are used for other iterative methods such as linear relaxation, e.g.\ scaling algorithms~\cite{Daniel:Scaling} and bipartite matching algorithms~\cite{Duff:Permuting}. 
Such algorithms may scale the infinity norm of both rows and columns in a matrix to 1 or permute large entries to the diagonal of a sparse matrix.
We usually have well-conditioned coefficient matrices for UI layout problems, for which the Kaczmarz method converges quickly.

% randomization to avoid effects of bad ordering
As described by~\eqref{MainKaczmarz}, the convergence rate of the Kaczmarz method may depend on the ordering of the rows of the matrix $A$.
A problematic ordering can lead to a drastically reduced rate of convergence.
To overcome this, a randomized variant can be used.
Strohmer and Vershynin proposed a further variant with weighted probabilities proportional to the norm of the $i$th row~\cite{SV09:Randomized-Kaczmarz}.

In the inconsistent case, it has been shown that the method exhibits the same convergence down to a threshold~\cite{Nee10:Randomized-Kaczmarz}, and modified methods even converge to the least-squares solution~\cite{censor1981row,galantai2003projectors,ZF12:Randomized-Extended}.
The convergence rate can further be improved by selecting blocks of rows at a time for the projection~\cite{Elf80:Block-Iterative-Methods,EHL81:Iterative-Algorithms,NW12:Two-Subspace-Projection,needell2013paved}.

\subsection{Inequalities} \label{sec:Inequalities}

The Kaczmarz method supports only linear equations, but we extend this algorithm for solving linear inequalities in a natural way, as in~\cite{LL10:Randomized-Methods}.
In each iteration, the algorithm ignores inequalities if they are satisfied, and otherwise treats them as if they were equations.
This means that inequalities influence the solving process only if this is necessary.

\begin{figure}[t]
\centering
\includegraphics[width=\columnwidth, height=2.6in]{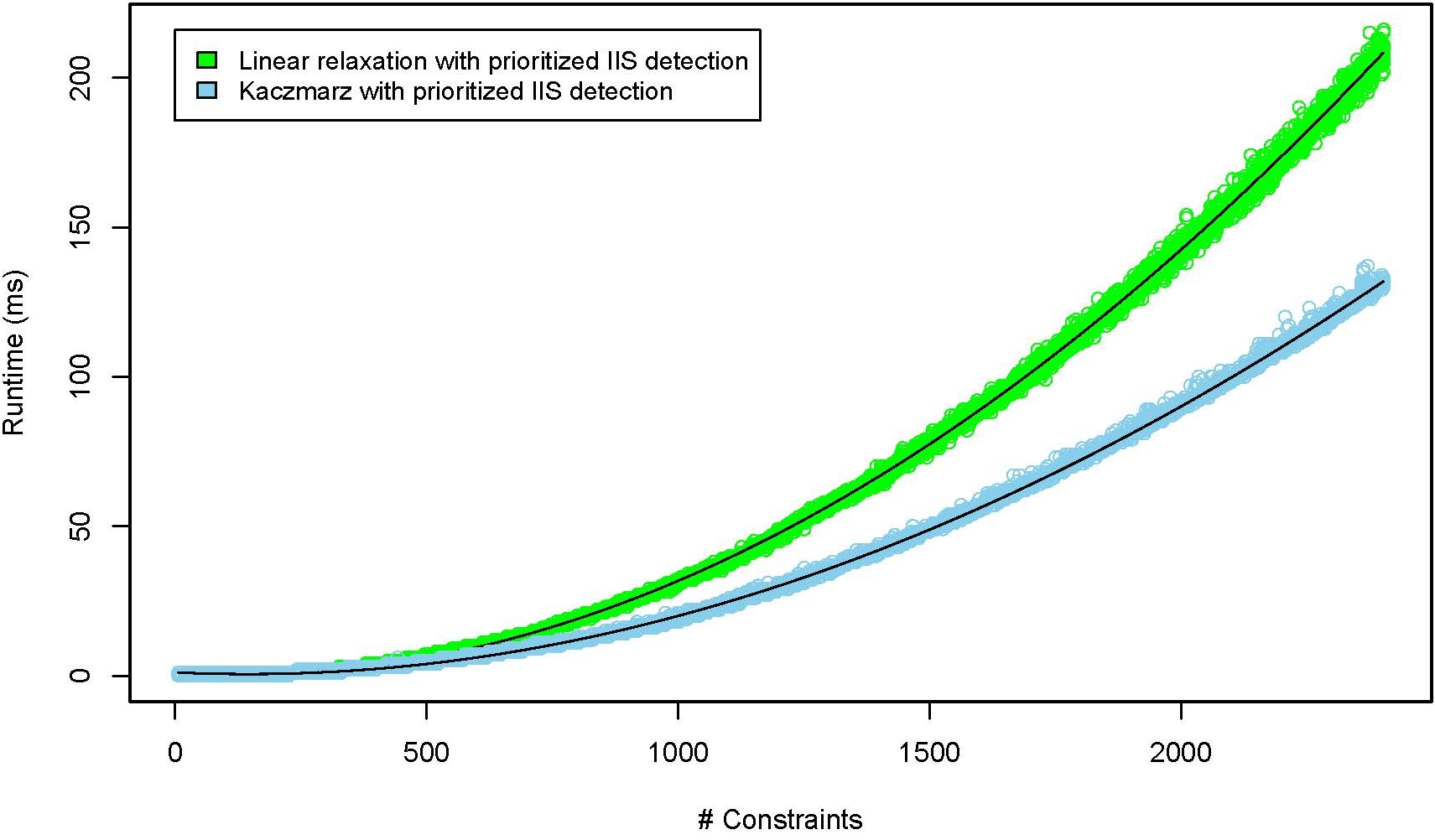}
\caption{Performance comparison of linear relaxation and Kaczmarz.} \label{fig:random}
\end{figure}

\subsection{Soft Constraints} \label{SoftConstraints}

% definition / introduction
For many problems, including UI layout, conflicting constraints occur naturally in specifications, as they express properties of a solution that are desirable but not mandatory.
As a result, soft constraints need to be supported, which are satisfied if possible, but do not render the specification infeasible if they are not.
A natural way to support soft constraints is to treat all constraints as soft constraints, with different priorities.
These priorities can be defined as a total order on all constraints that specifies which one of two constraints should be violated in case of a conflict.

% def soft solution
To define the solution of a system of prioritized soft constraints we first have to define the subset $E$ of constraints which we call \emph{enabled constraints}.
We consider the characteristic function $\mathbf{1}_E : \mbox{\it Constraints} \to \{ 0,1\}$ of $E$, which expresses whether a constraint is contained in $E$, to construct an integer in binary representation (\(\iota\)).
According to their priority, each constraint is represented by a bit of that integer, with constraints of higher priority taking the more significant bits.
Then such subsets can be compared by using the numerical order $\geq$ of the integers.
We are interested in the subset that is largest in that order and still fulfills the following property:
all constraints in the subset are non-conflicting.

% IIS detection
To add support for soft constraints to the Kaczmarz method, we use the prioritized IIS detection algorithm~\cite{Jamil:Linear-Relaxation}, which approximates the maximum characteristic function starting from the most significant bit.
This algorithm starts with an empty set $E$ of enabled constraints.
It then adds constraints incrementally in order of descending priority so that $E$ is conflict-free, until all non-conflicting constraints have been added.
Iterating through the constraints, the algorithm adds each constraint tentatively to $E$ (``enabling'' it), and tries to solve the resulting specification.
If a solution is found, the constraint is kept.
Otherwise, the added constraint is removed again, in which case the previous solution is restored.
Finally, the algorithm proceeds to the constraint with the next lower priority, until all constraints have been considered.
This algorithm assumes that the method used for solving the system converges if there is no conflict, which is the case for Kaczmarz.

\section{Experimental Evaluation} \label{Evaluation}

In this section we present an experimental evaluation of the proposed algorithm.
We conduct two different experiments to evaluate (i) the convergence behavior, and (ii) the performance in terms of computation time.
The experiments are described in the following.

\begin{figure}[t]
\centering
\includegraphics[width=\columnwidth, height=2.6in]{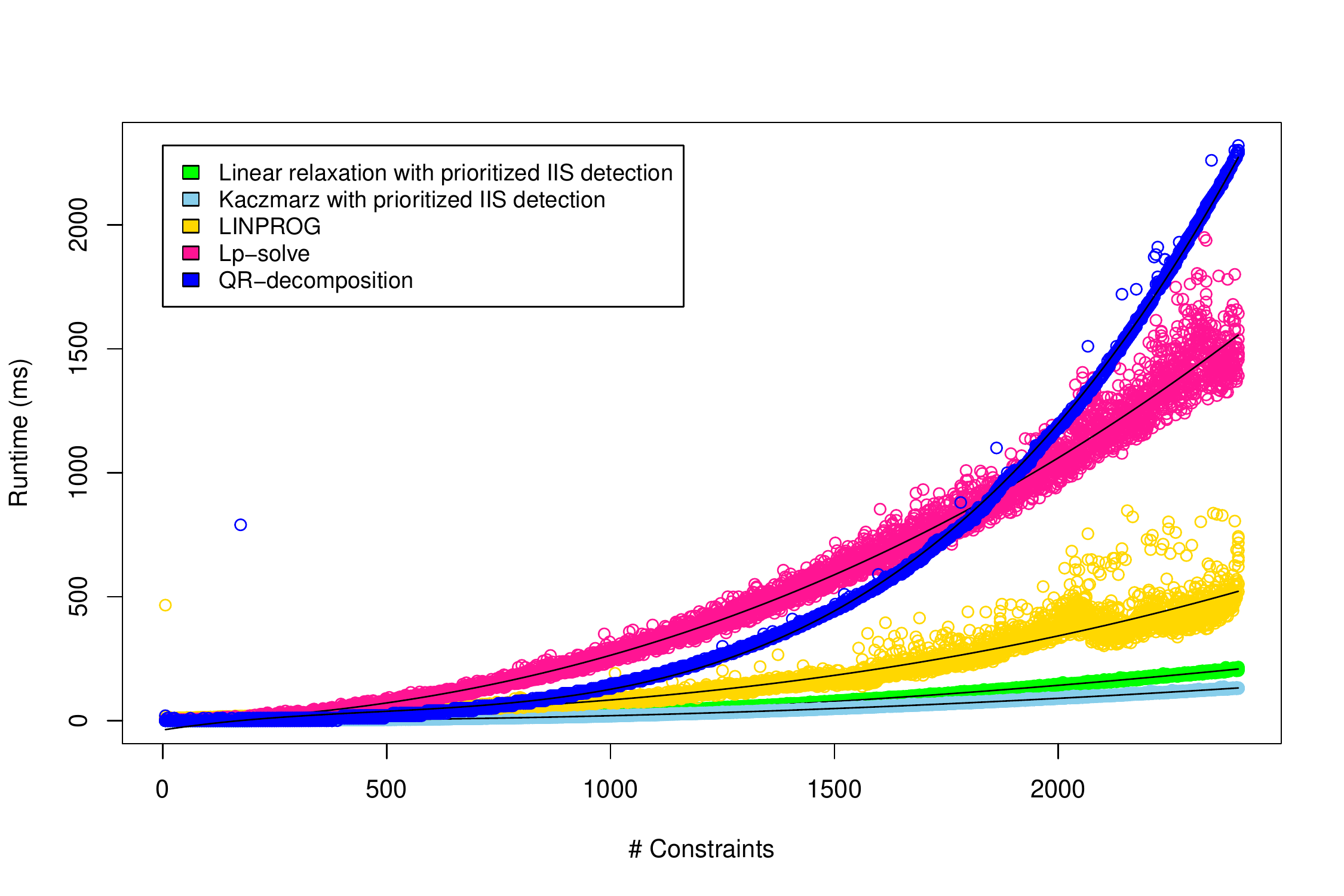}
\caption{Performance comparison of linear relaxation with prioritized IIS detection and random pivot assignment, Kaczmarz with prioritized IIS detection, LINPROG, LP-Solve, and QR-decomposition}
\label{fig:lp-solve}
\end{figure}

\subsection{Methodology}

For all experiments we used the same hardware and test data generator, but instrumentalized the algorithms differently.
We used the following setup:
a desktop computer with Intel i5 3.3GHz processor and 64-bit Windows 7, running an Oracle Java virtual machine.
Layout specifications were randomly generated using the test data generator described in~\cite{Weber:High-Level-Constraints}.
For each experiment the same set of test data was used.
The specification size was varied from 4 to 2402 constraints, in increments of 4 constraints (2 new constraints for positioning and 2 new constraint for the preferred size of a new widget).
For each size 10 different layouts were generated, resulting in a total of 6000 different layout specifications.
A tolerance of 0.01 was used for solving.
For Kaczmarz a relaxation parameter of 1.0 was used; for linear relaxation a slightly smaller relaxation parameter of 0.7 had to be used to avoid problems of divergence.

In the first experiment we investigated the convergence behavior of the algorithms.
We measured for each algorithm the number of sub-optimal solutions.
A solution is sub-optimal if the error of a constraint (the difference between the right-hand and left-hand side) is bigger than the given tolerance.

In the second experiment we measured the performance in terms of computation time (\(T\)) in milliseconds (ms), depending on the problem size measured in number of constraints~($c$).
The proposed algorithm was used to solve each of the problems of the test data set and the time was measured.
As a reference, all the generated specifications were also solved with an implementation of linear relaxation with prioritized IIS detection~\cite{Jamil:Linear-Relaxation}, Matlab's LINPROG solver~\cite{linprog:solver} and LP-Solve~\cite{lp:solve}.
We selected these solvers as linear relaxation is another standard iterative method similar to Kaczmarz, LINPROG is widely known for its speed, and LP-Solve was previously used to solve UI layout problems~\cite{Weber:High-Level-Constraints}.
Additionally, we wanted to compare our algorithm with a direct method, so we also included the implementation of QR-decomposition in the Apache Commons Mathematics Library~\cite{Commons:Math} in the evaluation.

\begin{table}[t]
\begin{center}
\begin{tabular}{rl}
  \toprule
 Symbol & Explanation \\
  \midrule
 \(\beta_0\) & Intercept of the regression model\\
 \(\beta_{1-3}\) & Estimated model parameters\\
 \(c\) & Number of constraints\\
  \(T\) & Measured time in milliseconds\\
   \(R^2\) & Coefficient of determination of the regression models\\
\bottomrule
\end{tabular}
\caption{Symbol table}
\label{tab:symbs}
\end{center}
\end{table}

\newcommand{\sign}[1]{\textsuperscript{#1}}
\begin{table*}[t]
\begin{center}
\begin{tabular}{rlllll}
  \toprule
\multicolumn{1}{c}{\bf Strategy}                       &\multicolumn{1}{c}{\(\beta_0\)} &\multicolumn{1}{c}{\(\beta_1\)} & \multicolumn{1}{c}{\(\beta_2\)}  & \multicolumn{1}{c}{\(\beta_3\)} & \(R^2\)  \\
  \midrule
Kaczmarz (prioritized IIS detection) &\(\;\;\,1.136 \)\sign{***} & \(-7.740 \cdot 10^{+03}\)\sign{***} & \(\;\;\, 2.723 \cdot 10^{-05}\)\sign{***} & \(-5.587 \cdot 10^{-10}\)\sign{***}&\(0.9994\) \\
Linear relaxation (prioritized IIS detection) &\(\;\;\, 1.035 \)\sign{***} & \(-1.112 \cdot 10^{-02}\)\sign{***} & \(\;\;\, 4.278 \cdot 10^{-05}\)\sign{***} & \(-9.176 \cdot 10^{-10}\)\sign{***}&\(0.9994\) \\
LINPROG &\(\;\;\, 18.29\)\sign{***} & \(\;\;\, 1.591 \cdot 10^{-04}\) & \(\;\;\, 4.934 \cdot 10^{-05}\)\sign{***} & \(\;\;\, 1.577 \cdot10^{-08}\)\sign{***} &\(0.9367\) \\
LP-Solve &\( -2.491\)\sign{***} & \(\;\;\, 3.924 \cdot 10^{-02}\)\sign{***} & \(\;\;\, 2.079 \cdot 10^{-04}\)\sign{***} & \(\;\;\, 1.904 \cdot 10^{-08}\)\sign{***} &\(0.9900\) \\
QR-Decomposition  &\( -37.70\)\sign{***} & \(\;\;\, 0.2802 \)\sign{***} & \(-4.009 \cdot 10^{-04}\)\sign{***} & \(\;\;\, 2.850 \cdot 10^{-07}\)\sign{***} & \(0.9989\) \\
\bottomrule
\multicolumn{6}{l}{Significance codes: \sign{***} \(p<0.001\)} \\
\end{tabular}
\caption{Regression models for the different solving strategies}
\label{tab:reg1}
\end{center}
\end{table*}

\subsection{Results}

The first experiment tested the convergence behavior of the algorithms.
We found that all algorithms converge, which is expected since the algorithms were designed to find a solvable subproblem.

In the second experiment we investigated the performance behavior of the algorithms.
To identify the performance trend of the algorithms over $c$, we defined some regression models (linear, quadratic, log, cubic).
We found that the best-fitting model is the polynomial model
\[
T(c) = \beta_0 + \beta_1c + \beta_2c^2 + \beta_3c^3 + \epsilon,
\]
which gave us a good fit for the performance data.
Table~\ref{tab:symbs} explains the symbols used.
Key parameters of the models are depicted in Table~\ref{tab:reg1}; a graphical representation of the models can be found in Figures~\ref{fig:random}~and~\ref{fig:lp-solve}.

Figure~\ref{fig:random} illustrates the performance of Kaczmarz with prioritized IIS detection and linear relaxation with prioritized IIS detection.
As the graphs show, Kaczmarz with prioritized IIS detection exhibits a better performance than linear relaxation with prioritized IIS detection.
Figure~\ref{fig:lp-solve} compares the two aforementioned algorithms to LINPROG, LP-Solve and QR-decomposition.
Kaczmarz with prioritized IIS detection performs significantly better than LINPROG, LP-Solve and QR-decomposition, especially for bigger problems.

\subsection{Discussion}

% performance comparison with direct methods
The performance results show that the Kaczmarz method with prioritized IIS detection is the fastest, and the direct method, QR-decomposition, is the slowest for UI layout problems.
The purely iterative algorithms, Kaczmarz and linear relaxation, are both faster than QR-decomposition, LP-Solve and LINPROG.
A plausible reason why LINPROG and LP-Solve are slower is that they use the simplex algorithm with one direct method solving step per iteration.
As described earlier, direct methods suffer from fill-in effects when solving sparse systems, which is generally a disadvantage compared to iterative methods in this case.

% performance comparison with linear relaxation
The Kaczmarz algorithm with prioritized IIS detection exhibits a better performance than linear relaxation with prioritized IIS detection.
A likely factor contributing to this is that for Kaczmarz a slightly larger relaxation parameter than for linear relaxation can be used.
Smaller relaxation parameters may slow down the convergence of iterative methods, potentially requiring more iterations to converge.
However, it was not possible to increase the relaxation parameter of linear relaxation as this would cause divergence for some of the problems.

% runtime variance
The runtime of the two linear programming solvers exhibits a much larger variance compared to the purely iterative solvers.
One possible reason for this is that for some cases the direct methods used in the linear programming solvers are particularly inefficient, e.g.\ due to fill-in effects.
A smaller variance and hence more predictable runtime is particularly beneficial for the UI layout domain because large changes in runtime can affect the user experience, e.g.\ when resizing a GUI window interactively.

\section{Conclusion} \label{Conclusion}

We have proposed a new algorithm for using Kaczmarz for solving constraint-based UI layout problems.
In particular, we presented the following contributions:
\begin{itemize}
  \item Extensions of Kaczmarz for solving linear inequality and prioritized soft constraints.
	\item An experimental evaluation demonstrating the feasibility of using Kaczmarz for UI layout.
	\item Experimental data indicating that Kaczmarz outperforms modern linear programming solvers, linear relaxation and a recent implementation of QR-decomposition.
\end{itemize}

With the contributions mentioned above, we have demonstrated that the Kaczmarz method can be used efficiently in solvers for constraint-based UIs.
As a future work, we will investigate the effect of the relaxation parameter $\omega$ on the convergence behavior for UI layout problems.
Furthermore, we will investigate the use of pre-conditioners.

\bibliographystyle{IEEEtran}
\bibliography{bibliography}
\end{document}